\theoremstyle{definition}
\newtheorem{lem}{Lemma}[section]
\newtheorem{thm}[lem]{Theorem}
\newtheorem{cor}[lem]{Corollary}
\begin{document}

\title{Some Winnability Results for the Neighborhood and Group Labeling Lights Out Games}

\author{Brittany Doherty\thanks{brittanyrcast@gmail.com}
\and Christian J. Miller\thanks{Department of Mathematics, Michigan State University, East Lansing, MI 48824, USA, mill3706@msu.edu, }
\and Darren B. Parker\thanks{Department of Mathematics, Grand Valley State University, Allendale, Michigan 49401-6495, parkerda@gvsu.edu} }

\date{
\small MR Subject Classifications: 05C20, 06B99\\
\small Keywords: Lights out, light-switching game, winnability, graph game}

\maketitle

\begin{abstract}
We look at both the \emph{group labeling lights out game} and the \emph{neighborhood lights out game}.  Our main focus is to determine necessary and sufficient conditions for when the group labeling lights out game on path graphs, cycle graphs, and complete bipartite graphs can be won for every possible initial labeling.  In the process of solving this problem, we demonstrate a new proof for when the neighborhood lights out game on complete bipartite graphs can be won for every possible initial labeling.
\end{abstract}

\section{Introduction}

The lights out game on graphs or directed graphs is an example of a light-switching game.  In any light-switching game, there is a collection of lights that can be on, off, or perhaps have multiple on-states (which can be interpreted as different colors or different intensities of the same color).  There is also a collection of switches, where each switch can change the states of one or more of the lights when toggled.  The object of the game is usually to get all the lights into the ``off'' state.  Examples of light-switching games are the Berlekamp (or Gale-Berlekamp) light-switching game (see \cite{Brualdi/Meyer:15}, \cite{Carson/Stolarski:04}, and \cite{Schauz:11}) and Merlin's Magic Square (see \cite{Pelletier:87} and \cite{Stock:89}).

Lights out is a commonly studied light-switching game that was originally an electronic game created by Tiger Electronics in 1995.  The idea behind this game has since been extended to several light-switching games on graphs.  Some of these extensions are direct generalizations of the original game, like the $\sigma^+$-game in \cite{Sutner:90}, the neighborhood lights out game developed independently in \cite{Arangala:12} and \cite{paper11}, and a matrix-generated version of the game in \cite{paper15}.  Other versions are explored in \cite{Pelletier:87}, \cite{Araujo:00}, \cite{Craft/Miller/Pritikin:09}) and \cite{paper16}.

Each version of the game begins with some labeling of the vertices, usually by elements of $\mathbb{Z}_m$ for some $m \ge 2$.  The switches in this game are the vertices.  The game is won when we achieve some desired labeling, usually where each vertex has label 0 (i.e. 0 is considered the ``off'' label).  We call this labeling the \emph{zero labeling}.

The most direct generalization of the original lights out game is called the \emph{neighborhood $m$-lights out game}, where $m \ge 2$ is an integer.  In this game, the labels come from $\mathbb{Z}_m$.  When a vertex $v$ is toggled, the labels of $v$ and every vertex adjacent to $v$ are increased by 1 modulo $m$.  As noted above, the game is won when the zero labeling is achieved.  This game is closely linked to the \emph{neighborhood matrix} of $G$, which we denote by $N(G)$, or simply $N$ if it is clear what the graph is.  Thus, we also call the neighborhood $m$-lights out game the $(N,m)$-lights out game.

We also study a version of the lights out game defined in \cite{paper14}, where the vertex labels come from a group $H$.  The game begins with a graph $G$ and an initial \emph{labeling} of the vertices with elements of $H$, which we primarily express as a function $\lambda_0: V(G) \rightarrow H$.  The game is played by toggling vertices.  Each time a vertex is toggled, it changes the labeling of $G$. If at any point in the game we have a labeling $\lambda: V(G) \rightarrow H$, then when a vertex $v$ is toggled, this changes the labeling to $\lambda'$, where for each $w \in V(G)$, we have

\[
\lambda'(w) = \left\{ \begin{matrix} \lambda(v) * \lambda(w), & w = v \hbox{ or } vw \in E(G) \\
\lambda(w), & \hbox{ otherwise.} \end{matrix} \right.
\]
The game is won when we achieve the \emph{identity labeling} where the label of each vertex is the identity element of $H$.  If $H$ is abelian and the binary operation is addition, we also call this the \emph{zero labeling} as in the neighborhood lights out game.  We call this game the \emph{$H$-labeling lights out game} (or, more briefly, the \emph{$H$-labeling game}).

In both of the above games, if we begin the game with an unfortunate labeling, it may be impossible to win the game.  If it is possible to win the game when we begin with the labeling $\lambda_0$, we call $\lambda_0$ an $(N,m)$-winnable labeling in the neighborhood lights out game and an $H$-winnable labeling in the $H$-labeling lights out game.  Depending on the game, we say that $G$ is $(N,m)$-Always Winnable or $H$-Always Winnable (abbreviated $(N,m)$-AW or $H$-AW) if every labeling of $V(G)$ is $(N,m)$-winnable or $H$-winnable, respectively.

For the group labeling lights out game, we focus here on games where the group is a cyclic group $\mathbb{Z}_m$.  Our main goal is to determine which path graphs, cycle graphs, and complete bipartite graphs are $\mathbb{Z}_m$-AW.  In Section~\ref{GeneralResults}, we narrow down the values of $m$ that can possibly support $\mathbb{Z}_m$-AW graphs.  We also narrow down the groups that are necessary to consider when proving a graph is $\mathbb{Z}_m$-AW.  In Section~\ref{Neighborhood}, we show how we can use the neighborhood lights out game to help us find necessary conditions for our graphs to be $\mathbb{Z}_m$-AW.  Along the way, we give a new, simpler proof of $(N,m)$-AW complete bipartite graphs.  Finally, in Section~\ref{PathCycleBipartite}, we prove that the necessary conditions from Section~\ref{Neighborhood} are sufficient as well.

\section{Winnability with the Group $\mathbb{Z}_m$} \label{GeneralResults}

We begin with a result from \cite{paper14} that narrows down the possible cyclic groups that lead to always winnable graphs.

\begin{thm}
\cite[Thm. 2.4]{paper14}
If $m=2^kd$, where $d$ is odd, then there is a one-to-one correspondence between $\mathbb{Z}_m$-winnable labelings of $G$ and $\mathbb{Z}_{2^k}$-winnable labelings of $G$.
\end{thm}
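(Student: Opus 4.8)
The plan is to apply the Chinese Remainder Theorem to split $\mathbb{Z}_m$ into its $2$-part $\mathbb{Z}_{2^k}$ and its odd part $\mathbb{Z}_d$, and then to show that the odd part carries no information. Let $\phi\colon\mathbb{Z}_m\to\mathbb{Z}_{2^k}\times\mathbb{Z}_d$ be the CRT isomorphism and let $\psi_1\colon\mathbb{Z}_m\to\mathbb{Z}_{2^k}$ and $\psi_2\colon\mathbb{Z}_m\to\mathbb{Z}_d$ be its coordinate projections (reduction modulo $2^k$ and modulo $d$). For a labeling $\lambda\colon V(G)\to\mathbb{Z}_m$, write $\psi_i\lambda$ for the pointwise composite. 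The first step is to observe that a single toggle move is compatible with $\psi_1$ and $\psi_2$: the update rule changes a label only by adding another label (and leaves all other labels fixed), so since each $\psi_i$ is additive, toggling a vertex $v$ and then applying $\psi_i$ pointwise agrees with applying $\psi_i$ first and then toggling $v$ in the $\mathbb{Z}_{2^k}$-game (respectively the $\mathbb{Z}_d$-game). Iterating, a toggle sequence $S$ turns $\lambda_0$ into the zero labeling of $\mathbb{Z}_m$ if and only if $S$ turns $\psi_1\lambda_0$ into the zero labeling of $\mathbb{Z}_{2^k}$ and simultaneously turns $\psi_2\lambda_0$ into the zero labeling of $\mathbb{Z}_d$.

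The crux is the following claim: when $d$ is odd, the only $\mathbb{Z}_d$-winnable labeling of $G$ is the zero labeling. For each vertex $v$, toggling $v$ is an injective operation on the set of labelings $V(G)\to\mathbb{Z}_d$, because if two labelings $\mu,\mu'$ become equal after toggling $v$, then in particular $2\mu(v)=2\mu'(v)$, hence $\mu(v)=\mu'(v)$ since $2$ is a unit in $\mathbb{Z}_d$, and then $\mu(w)=\mu'(w)$ for every other vertex $w$. Since toggling $v$ fixes the zero labeling, injectivity shows that the only labeling sent to the zero labeling by toggling $v$ is the zero labeling itself. Now if a toggle sequence drives $\lambda_0$ to the zero labeling through intermediate labelings $\lambda_0,\lambda_1,\dots,\lambda_T$ (each obtained from the previous by one toggle, with $\lambda_T$ the zero labeling), a downward induction on $i$ using the previous sentence gives $\lambda_T=\lambda_{T-1}=\cdots=\lambda_0$, all equal to the zero labeling. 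I expect this claim to be the main obstacle: it is where one must confront the non-linearity of the group game (the toggled vertex doubles its own label) and see that it is harmless modulo an odd number, precisely because doubling is then invertible and the zero labeling is a global fixed point that nothing else can reach. I would also record the easy companion fact that in any abelian group every toggle move fixes the zero labeling, so every toggle sequence ``wins'' it.

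To finish, I would exhibit the correspondence explicitly. Define $\Phi$ from the set of $\mathbb{Z}_m$-winnable labelings of $G$ to the set of $\mathbb{Z}_{2^k}$-winnable labelings of $G$ by $\Phi(\lambda_0)=\psi_1\lambda_0$; this is well defined by the compatibility observation. For the inverse, send a $\mathbb{Z}_{2^k}$-winnable labeling $\lambda_1$ to $\Psi(\lambda_1)=\phi^{-1}(\lambda_1,\mathbf 0)$, the unique $\mathbb{Z}_m$-labeling that reduces to $\lambda_1$ modulo $2^k$ and to the zero labeling modulo $d$. A winning sequence for $\lambda_1$ in the $\mathbb{Z}_{2^k}$-game also wins the zero labeling in the $\mathbb{Z}_d$-game, so by the compatibility observation it wins $\Psi(\lambda_1)$ in the $\mathbb{Z}_m$-game; hence $\Psi(\lambda_1)$ is $\mathbb{Z}_m$-winnable. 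Finally $\Phi\circ\Psi$ is the identity by construction, and $\Psi\circ\Phi$ is the identity because for any $\mathbb{Z}_m$-winnable $\lambda_0$ the labeling $\psi_2\lambda_0$ is $\mathbb{Z}_d$-winnable, hence equals the zero labeling by the claim, so $\lambda_0=\phi^{-1}(\psi_1\lambda_0,\mathbf 0)=\Psi(\Phi(\lambda_0))$. Thus $\Phi$ is the desired bijection.
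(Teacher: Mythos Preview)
The paper does not supply its own proof of this theorem: it is quoted verbatim from \cite[Thm.~2.4]{paper14} and used as a black box to derive the corollary that $G$ can be $\mathbb{Z}_m$-AW only when $m$ is a power of $2$. So there is no in-paper argument to compare your proposal against.

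That said, your argument is correct and self-contained. The CRT splitting together with the observation that a toggle is a coordinatewise additive update makes the compatibility with the projections $\psi_1,\psi_2$ immediate. Your key claim---that for odd $d$ the only $\mathbb{Z}_d$-winnable labeling is the zero labeling---is exactly the right place to isolate the work, and your proof of it is clean: the toggle at $v$ doubles $\lambda(v)$ and adds $\lambda(v)$ to the labels of neighbors, so when $2$ is a unit the map is injective on labelings, while the zero labeling is a fixed point of every toggle; a downward induction on the length of a winning sequence then forces the initial labeling to be zero. The construction of $\Phi$ and $\Psi$ and the verification that they are mutual inverses are routine given the claim. One tiny point worth making explicit (you essentially do) is that for non-neighbors of $v$ the labels are unchanged, so injectivity there is immediate. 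Your write-up would serve perfectly well as a proof the paper could have included.
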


This implies that if $d>1$, then it is impossible for all labelings to be winnable.  We thus get the following.

\begin{cor}
If $G$ is $\mathbb{Z}_m$-AW, then $m = 2^k$ for some $k \ge 1$.
\end{cor}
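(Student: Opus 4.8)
The plan is to derive the corollary from the cited theorem by a straightforward cardinality count. Write $m = 2^k d$ with $d$ odd; since $m \ge 2$, the conclusion $m = 2^k$ with $k \ge 1$ is equivalent to showing $d = 1$. Suppose, toward a contradiction, that $G$ is $\mathbb{Z}_m$-AW and $d > 1$ (so in fact $d \ge 3$). Let $n = |V(G)|$; I will assume $n \ge 1$, which holds for any graph carrying a nontrivial game (the empty graph is a degenerate case one can dismiss in a sentence, or exclude by convention).

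First I would count the $\mathbb{Z}_m$-winnable labelings: since $G$ is $\mathbb{Z}_m$-AW, \emph{every} function $V(G) \to \mathbb{Z}_m$ is $\mathbb{Z}_m$-winnable, so there are exactly $m^n$ of them. Next, by the cited theorem (\cite[Thm. 2.4]{paper14}), the $\mathbb{Z}_m$-winnable labelings of $G$ are in one-to-one correspondence with the $\mathbb{Z}_{2^k}$-winnable labelings of $G$, so there are exactly $m^n$ of the latter. But every $\mathbb{Z}_{2^k}$-winnable labeling is in particular a function $V(G) \to \mathbb{Z}_{2^k}$, and there are only $(2^k)^n$ such functions in total. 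Hence $m^n \le (2^k)^n$, which forces $m \le 2^k$, i.e. $d \le 1$, contradicting $d \ge 3$. Therefore $d = 1$, so $m = 2^k$, and $m \ge 2$ gives $k \ge 1$.

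I do not expect a genuine obstacle here: the argument is purely a counting consequence of the bijection, so the only points needing care are bookkeeping ones — namely, making explicit that "$\mathbb{Z}_m$-AW" means all $m^n$ labelings are winnable (pinning down the count on the $\mathbb{Z}_m$ side), and noting that the $\mathbb{Z}_{2^k}$-winnable labelings are a subset of all $\mathbb{Z}_{2^k}$-labelings (bounding the count on the $\mathbb{Z}_{2^k}$ side). One might alternatively phrase the contradiction without exponents by observing that the correspondence would embed a set of size $m^n$ into a set of size $(2^k)^n < m^n$; either formulation is a one-line finish.
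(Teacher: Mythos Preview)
Your argument is correct and is precisely the counting argument the paper leaves implicit: the paper simply remarks, just before stating the corollary, that the bijection ``implies that if $d>1$, then it is impossible for all labelings to be winnable,'' and your proposal spells out exactly why (namely $m^n \le (2^k)^n$). Your aside about the empty graph is a fair caveat that the paper does not address.
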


Thus, for the remainder of the paper, we assume the vertex labels come from $\mathbb{Z}_{2^k}$ for some $k \ge 1$.

For our next results, it is helpful to recall that every element of $\mathbb{Z}_{2^k}$ is a congruence class (i.e. a set of integers that are congruent to one another modulo $2^k$).  Moreover, for each $S \in \mathbb{Z}_{2^k}$, since $2^k$ is even, either every element of $S$ is odd or every element is even.  Thus, it makes sense to say that $S$ is even if every integer in $S$ is even, and $S$ is odd if every integer in $S$ is odd.  For $q \in \mathbb{Q}$, we define $qS = \{ x \in \mathbb{Q} : x = qs$ for some $s \in S \}$.  Note that if $S$ is even, then $\frac12 S \in \mathbb{Z}_{2^{k-1}}$ (see \cite{paper14} for details).  

Thinking of the labels this way helps us view a $\mathbb{Z}_{2^k}$-labeling lights out game as having a simultaneous $\mathbb{Z}_2$-labeling lights out game.  For a graph $G$ and labeling $\lambda: V(G) \rightarrow \mathbb{Z}_{2^k}$, define $\lambda_2: V(G) \rightarrow \mathbb{Z}_2$ by

\[
\lambda_2(v) = \left\{ \begin{matrix} 1, & \lambda(v) \hbox{ is odd} \\
0, & \lambda(v) \hbox{ is even} \end{matrix} \right.
\]

\begin{lem}
\label{2lem}
Let $G$ be a graph, let $\lambda: V(G) \rightarrow \mathbb{Z}_{2^k}$ be a labeling, and let $\lambda_2: V(G) \rightarrow \mathbb{Z}_2$ be as defined above.
\begin{enumerate}
\item \label{setup} If we toggle $v \in V(G)$ under the rules of the $\mathbb{Z}_{2^k}$-labeling game with labeling $\lambda$ to get the labeling $\pi$, then toggling $v$ under the rules of the $\mathbb{Z}_2$-labeling game with labeling $\lambda_2$ results in the labeling $\pi_2$.
\item \label{inductionhelper} Suppose $\lambda_2$ is $\mathbb{Z}_2$-winnable.  If we begin the $\mathbb{Z}_{2^k}$-labeling game with labeling $\lambda$, then it is possible to toggle the vertices of $G$ so that every vertex has an even label.
\end{enumerate}
\end{lem}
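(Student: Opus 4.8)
The plan is to establish part~(\ref{setup}) by a direct computation using the fact that reduction modulo $2$ is additive, and then to obtain part~(\ref{inductionhelper}) from it by induction along a winning toggle sequence for the $\mathbb{Z}_2$-game.

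For part~(\ref{setup}), recall that toggling $v$ in the $\mathbb{Z}_{2^k}$-labeling game replaces $\lambda(w)$ by $\lambda(v)+\lambda(w)$ whenever $w=v$ or $vw\in E(G)$, and leaves the remaining labels unchanged. First I would note that the increment added at each affected vertex is the current label $\lambda(v)$, whose parity is by definition $\lambda_2(v)$. Since the parity of a sum is the sum of the parities, for every $w$ with $w=v$ or $vw\in E(G)$ we get $\pi_2(w)=\lambda_2(v)+\lambda_2(w)$ in $\mathbb{Z}_2$, which is exactly the label produced at $w$ by toggling $v$ in the $\mathbb{Z}_2$-labeling game started from $\lambda_2$; and for the remaining vertices $\pi_2(w)=\lambda_2(w)$, which again matches. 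Hence $\pi_2$ is precisely the labeling obtained by toggling $v$ in the $\mathbb{Z}_2$-game with labeling $\lambda_2$.

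For part~(\ref{inductionhelper}), since $\lambda_2$ is $\mathbb{Z}_2$-winnable there is a finite sequence of vertices $v_1,\dots,v_t$ whose successive toggling transforms $\lambda_2$ into the zero labeling of $\mathbb{Z}_2$. I would toggle $v_1,\dots,v_t$ in that order in the $\mathbb{Z}_{2^k}$-game started from $\lambda$, writing $\mu^{(i)}$ for the $\mathbb{Z}_{2^k}$-labeling after $i$ toggles (so $\mu^{(0)}=\lambda$), and prove by induction on $i$ that $\mu^{(i)}_2$ equals the $\mathbb{Z}_2$-position reached from $\lambda_2$ after toggling $v_1,\dots,v_i$. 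The inductive step is nothing but part~(\ref{setup}) applied to the labeling $\mu^{(i-1)}$ and the vertex $v_i$, combined with the induction hypothesis identifying $\mu^{(i-1)}_2$ with the appropriate $\mathbb{Z}_2$-position. Taking $i=t$ gives that $\mu^{(t)}_2$ is the zero labeling of $\mathbb{Z}_2$, which by the definition of $\lambda_2$ means exactly that every vertex has an even label under $\mu^{(t)}$, as required.

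I do not expect a serious obstacle: part~(\ref{setup}) is a one-line consequence of additivity of parity, and part~(\ref{inductionhelper}) is a routine transport of a $\mathbb{Z}_2$ winning sequence into the $\mathbb{Z}_{2^k}$-game. The one subtlety worth flagging is that the toggle increment in the $\mathbb{Z}_{2^k}$-game is state-dependent (it is the current label of the toggled vertex, which changes during play), so one cannot simply ``replay'' a sequence and expect to reproduce the $\mathbb{Z}_{2^k}$-labels; only the parity data is preserved along the way, and that is all part~(\ref{inductionhelper}) needs.
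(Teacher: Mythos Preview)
Your proposal is correct and follows essentially the same approach as the paper: the paper also proves part~(\ref{setup}) by tracking parities under a toggle (though it splits into the cases $\lambda(v)$ even and $\lambda(v)$ odd rather than invoking additivity of parity directly), and then derives part~(\ref{inductionhelper}) by inducting along a $\mathbb{Z}_2$-winning toggle sequence exactly as you do. Your remark about the state-dependence of the toggle increment is apt and matches the spirit of the paper's argument.
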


\begin{proof}
For (\ref{setup}), we consider the parity of $\lambda(v)$.  If $\lambda(v)$ is even, then when we toggle $v$ in the $\mathbb{Z}_{2^k}$-labeling game, each label of an adjacent vertex is increased by an even number, and labels of vertices not adjacent to $v$ are unchanged.  Thus, the parities of all vertices are unchanged and so $\pi_2=\lambda_2$.  Also, $\lambda_2(v)=0$, and so toggling $v$ in the $\mathbb{Z}_2$-labeling game leaves the labels of all vertices unchanged.  Thus, the resulting labeling is $\lambda_2$, which we determined above to be $\pi_2$.  In the case of $\lambda(v)$ odd, toggling $v$ in the $\mathbb{Z}_{2^k}$-labeling game gives us $\pi$, which is the same as $\lambda$ on all vertices not equal or adjacent to $v$.  The label on $v$ and each vertex adjacent to $v$ changes parity, since we add the odd number $\lambda(v)$ to each of these labels.  In the $\mathbb{Z}_2$-labeling game, $\lambda_2(v)=1$, and so toggling $v$ changes the parity of all adjacent vertices and leaves all other labels unchanged (just like with $\pi$).  We end up with the labeling $\pi_2$.

For (\ref{inductionhelper}), an easy induction using (\ref{setup}) gives us that for any sequence of toggles in the $\mathbb{Z}_{2^k}$-labeling game giving us a labeling $\pi$, the same toggles used in the corresponding $\mathbb{Z}_2$-labeling game gives us the labeling $\pi_2$.  Now suppose we begin the $\mathbb{Z}_{2^k}$-labeling game with labeling $\lambda$.  Since $\lambda_2$ is $\mathbb{Z}_2$-winnable, if we begin with labeling $\lambda_2$, we can toggle the vertices of $G$ in the $\mathbb{Z}_2$-labeling game to achieve the zero labeling.  We now apply these same toggles in the $\mathbb{Z}_{2^k}$-labeling game.  The resulting labeling $\pi$ in the $\mathbb{Z}_{2^k}$-labeling game has $\pi_2$ as the zero labeling, which means that $\pi(v)$ is even for all $v \in V(G)$.  This completes the proof.
\end{proof}

In the case that the range of $\lambda$ consists only of even labels, we get a similar relationship as above between the $\mathbb{Z}_{2^k}$-labeling game and the $\mathbb{Z}_{2^{k-1}}$-labeling game.

\begin{lem}
\label{k-1lem}
Let $G$ be a graph, and let $k \ge 2$.  Suppose $\lambda: V(G) \rightarrow \mathbb{Z}_{2^k}$ is a labeling where each vertex has an even label.  Define $\pi: V(G) \rightarrow \mathbb{Z}_{2^{k-1}}$ by $\pi(w)=\frac12 \lambda(w)$ for all $w \in V(G)$.  If we toggle the vertex $v$ in the $\mathbb{Z}_{2^k}$-labeling game with labeling $\lambda$ to get the labeling $\lambda'$, then toggling $v$ in the $\mathbb{Z}_{2^{k-1}}$-labeling game with labeling $\pi$ results in the labeling $\pi'$, where $\pi'(w) = \frac12 \lambda'(w)$ for all $w \in V(G)$.
\end{lem}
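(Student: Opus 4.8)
The plan is to prove the identity $\pi'(w)=\tfrac12\lambda'(w)$ one vertex at a time, splitting into the two cases that the toggle rule distinguishes: either $w$ is affected by toggling $v$ (that is, $w=v$ or $vw\in E(G)$) or it is not. Before doing that, I would record the two arithmetic facts about halving that make the statement meaningful. First, since $2^k$ is even and every label of $\lambda$ is even, each $\lambda(w)$ is an even element of $\mathbb{Z}_{2^k}$, so $\tfrac12\lambda(w)\in\mathbb{Z}_{2^{k-1}}$ is well defined (this is exactly the fact recalled just before the lemma, from \cite{paper14}); hence $\pi$ is a genuine $\mathbb{Z}_{2^{k-1}}$-labeling. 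Second, halving is additive on even elements: if $S,T\in\mathbb{Z}_{2^k}$ are both even, then $S+T$ is even and $\tfrac12(S+T)=\tfrac12 S+\tfrac12 T$ in $\mathbb{Z}_{2^{k-1}}$, which follows immediately by choosing representatives. (It is also worth noting, as a consistency check, that doubling and halving are mutually inverse bijections between the even elements of $\mathbb{Z}_{2^k}$ and all of $\mathbb{Z}_{2^{k-1}}$, so $2\pi(w)=\lambda(w)$.)

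With these in hand, the case analysis is short. If $w$ is neither equal nor adjacent to $v$, then the toggle rule gives $\lambda'(w)=\lambda(w)$ in the $\mathbb{Z}_{2^k}$-game and $\pi'(w)=\pi(w)$ in the $\mathbb{Z}_{2^{k-1}}$-game, so $\pi'(w)=\pi(w)=\tfrac12\lambda(w)=\tfrac12\lambda'(w)$. If instead $w=v$ or $vw\in E(G)$, then (the group $\mathbb{Z}_{2^k}$ being additive) $\lambda'(w)=\lambda(v)*\lambda(w)=\lambda(v)+\lambda(w)$, which is even since both summands are; applying additivity of halving gives $\tfrac12\lambda'(w)=\tfrac12\lambda(v)+\tfrac12\lambda(w)=\pi(v)+\pi(w)$ in $\mathbb{Z}_{2^{k-1}}$. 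On the other side, toggling $v$ in the $\mathbb{Z}_{2^{k-1}}$-game with labeling $\pi$ affects exactly this same $w$, producing $\pi'(w)=\pi(v)+\pi(w)$. The two expressions coincide, so $\pi'(w)=\tfrac12\lambda'(w)$ for every $w\in V(G)$, which is the claim.

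I do not expect a real obstacle: the entire content is bookkeeping about which cyclic group each operation lives in, and the one substantive fact — that $\tfrac12(\cdot)$ is well defined and additive on even classes — is already available from the cited work. The only point that needs a moment's care is that in this game the increment added to each affected vertex is the \emph{current} label $\lambda(v)$ of the toggled vertex, not a fixed constant as in the neighborhood game; so one must observe that it is precisely the hypothesis ``every vertex of $\lambda$ has an even label'' that forces $\lambda(v)$ to be even and thereby keeps $\lambda'$ even, making the halving operation applicable to $\lambda'$ as well. Once that is noted, the verification goes through verbatim in both cases.
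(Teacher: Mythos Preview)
Your proposal is correct and follows essentially the same approach as the paper's own proof: split into the cases where $w$ is affected by the toggle and where it is not, and in the affected case use additivity of halving to verify $\pi'(w)=\pi(v)+\pi(w)=\tfrac12\lambda(v)+\tfrac12\lambda(w)=\tfrac12\lambda'(w)$. You are slightly more explicit than the paper about the well-definedness and additivity of $\tfrac12(\cdot)$ and about the case $w=v$, but the argument is otherwise identical.
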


\begin{proof}
When we toggle $v$ in the $\mathbb{Z}_{2^k}$-labeling game with labeling $\lambda$, each adjacent vertex $w$ gets the label $\lambda'(w)=\lambda(v)+\lambda(w)$.  Toggling $v$ in the $\mathbb{Z}_{2^{k-1}}$-labeling game with labeling $\pi$ gives us $\pi'(w)=\pi(v)+\pi(w) = \frac12 \lambda(v) + \frac12 \lambda(w)$.  It is easy to show that (as sets) $\frac12 \lambda(v) + \frac12 \lambda(w) = \frac12 ( \lambda(v)+\lambda(w)) = \frac12 \lambda'(w)$.  Thus, $\pi'(w) = \frac12 \lambda'(w)$.  Since vertices not adjacent to $v$ are unchanged in both games, we get $\pi'(w) = \frac12 \lambda'(w)$ for them as well.
\end{proof}

We use Lemma~\ref{2lem} and Lemma~\ref{k-1lem} to prove a result that allows us to assume our vertex labelings come from $\mathbb{Z}_2$.

\begin{thm}
\label{reduce2}
Let $G$ be a graph and let $k \ge 1$.  Then $G$ is $\mathbb{Z}_{2^k}$-AW if and only if $G$ is $\mathbb{Z}_2$-AW.
\end{thm}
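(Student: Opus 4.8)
The plan is to prove the two implications separately. The forward direction ($\mathbb{Z}_{2^k}$-AW $\Rightarrow$ $\mathbb{Z}_2$-AW) is a short direct argument using a lift of a $\mathbb{Z}_2$-labeling together with Lemma~\ref{2lem}(\ref{setup}); the backward direction ($\mathbb{Z}_2$-AW $\Rightarrow$ $\mathbb{Z}_{2^k}$-AW) is the substantive half and will go by induction on $k$, peeling off one factor of $2$ at a time via Lemma~\ref{2lem}(\ref{inductionhelper}) and Lemma~\ref{k-1lem}.

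For the forward direction, suppose $G$ is $\mathbb{Z}_{2^k}$-AW and let $\mu: V(G) \rightarrow \mathbb{Z}_2$ be an arbitrary $\mathbb{Z}_2$-labeling. Choose a labeling $\lambda: V(G) \rightarrow \mathbb{Z}_{2^k}$ with $\lambda(v) \in \{0,1\}$ for each $v$, selected so that $\lambda_2 = \mu$. Since $G$ is $\mathbb{Z}_{2^k}$-AW, some sequence of toggles takes $\lambda$ to the zero labeling of $\mathbb{Z}_{2^k}$. By the same induction-on-toggle-sequences argument already used in the proof of Lemma~\ref{2lem}(\ref{inductionhelper}) (applying part (\ref{setup}) at each step), applying that same sequence of toggles in the $\mathbb{Z}_2$-labeling game takes $\lambda_2 = \mu$ to the parity reduction of the zero labeling, which is the zero labeling of $\mathbb{Z}_2$. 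Hence $\mu$ is $\mathbb{Z}_2$-winnable, and as $\mu$ was arbitrary, $G$ is $\mathbb{Z}_2$-AW.

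For the backward direction, assume $G$ is $\mathbb{Z}_2$-AW; we show $G$ is $\mathbb{Z}_{2^k}$-AW for all $k \ge 1$ by induction on $k$. The base case $k=1$ is the hypothesis. For $k \ge 2$, assume $G$ is $\mathbb{Z}_{2^{k-1}}$-AW and let $\lambda: V(G) \rightarrow \mathbb{Z}_{2^k}$ be arbitrary. Then $\lambda_2$ is $\mathbb{Z}_2$-winnable since $G$ is $\mathbb{Z}_2$-AW, so Lemma~\ref{2lem}(\ref{inductionhelper}) lets us toggle vertices in the $\mathbb{Z}_{2^k}$-labeling game to reach a labeling $\lambda'$ in which every vertex has an even label. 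Define $\pi: V(G) \rightarrow \mathbb{Z}_{2^{k-1}}$ by $\pi(w) = \frac12 \lambda'(w)$. By the inductive hypothesis $\pi$ is $\mathbb{Z}_{2^{k-1}}$-winnable, so some toggle sequence takes $\pi$ to the zero labeling of $\mathbb{Z}_{2^{k-1}}$. Applying that same toggle sequence in the $\mathbb{Z}_{2^k}$-labeling game starting from $\lambda'$ and invoking Lemma~\ref{k-1lem} repeatedly — the halving relation persists at every step, and in particular each intermediate $\mathbb{Z}_{2^k}$-labeling still has all even labels, so the lemma applies at each toggle — yields a labeling $\lambda''$ with $\frac12 \lambda'' = 0$, i.e. $\lambda''$ is the zero labeling of $\mathbb{Z}_{2^k}$. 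Concatenating the two toggle sequences shows $\lambda$ is $\mathbb{Z}_{2^k}$-winnable, completing the induction and hence the proof.

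The one point requiring care is the passage from the single-toggle statements of Lemma~\ref{2lem} and Lemma~\ref{k-1lem} to statements about arbitrary toggle sequences; this is a routine induction on the number of toggles, exactly parallel to the existing proof of Lemma~\ref{2lem}(\ref{inductionhelper}). In the backward direction one should explicitly observe that the "every vertex has an even label" hypothesis of Lemma~\ref{k-1lem} is preserved after each toggle — it is, since at each stage the $\mathbb{Z}_{2^k}$-labeling equals $2$ times a genuine $\mathbb{Z}_{2^{k-1}}$-labeling — so the lemma may legitimately be reapplied. Everything else is bookkeeping with the induction on $k$.
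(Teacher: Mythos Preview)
Your proof is correct and follows essentially the same approach as the paper's own proof: the forward direction lifts a $\mathbb{Z}_2$-labeling to $\mathbb{Z}_{2^k}$ and applies Lemma~\ref{2lem}(\ref{setup}), and the backward direction is induction on $k$ using Lemma~\ref{2lem}(\ref{inductionhelper}) to make all labels even and then Lemma~\ref{k-1lem} to descend to $\mathbb{Z}_{2^{k-1}}$. Your additional remarks on the induction over toggle sequences and the preservation of evenness are welcome clarifications but do not change the argument.
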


\begin{proof}
First we assume $G$ is $\mathbb{Z}_{2^k}$-AW.  To prove $G$ is $\mathbb{Z}_2$-AW, we let $\lambda: V(G) \rightarrow \mathbb{Z}_2$ be a labeling and prove $\lambda$ is $\mathbb{Z}_2$-winnable.  We define $\pi: V(G) \rightarrow \mathbb{Z}_{2^k}$ by $\pi(v) = \lambda(v)$ mod $2^k$.  It is clear from the definition that $\pi_2=\lambda$.  Since $G$ is $\mathbb{Z}_{2^k}$-AW, $\pi$ is $\mathbb{Z}_{2^k}$-winnable.  Thus, we can toggle the vertices in the $\mathbb{Z}_{2^k}$-labeling game to get the zero labeling, which we will call $\pi'$.  By Lemma~\ref{2lem}(\ref{setup}), if we do this same toggling in the $\mathbb{Z}_2$-labeling game with initial labeling $\lambda = \pi_2$, we end up with the labeling $\pi'_2$.  Since $\pi'$ is the zero labeling, all labels are even, and so $\pi'_2$ is the zero labeling as well.  This proves that $\lambda$ is $\mathbb{Z}_2$-winnable, and so $G$ is $\mathbb{Z}_2$-AW.

For the other direction, assume $G$ is $\mathbb{Z}_2$-AW and let $\lambda: V(G) \rightarrow \mathbb{Z}_{2^k}$.  We prove $\lambda$ is $\mathbb{Z}_{2^k}$-winnable by induction.  The case $k=1$ follows from the assumption that $G$ is $\mathbb{Z}_2$-AW.  For $k>1$, by Lemma~\ref{2lem}(\ref{inductionhelper}), since $G$ is $\mathbb{Z}_2$-AW, we can toggle the vertices in the $\mathbb{Z}_{2^k}$-lights out game so that all vertices have even labels.  Thus, we can assume $\lambda(v)$ is even for all $v \in V(G)$.  Since all labels of $\lambda$ are even, we can define $\pi: V(G) \rightarrow \mathbb{Z}_{2^{k-1}}$ by $\pi(w)=\frac12 \lambda(w)$.  

By the induction hypothesis, $\pi$ is $\mathbb{Z}_{2^{k-1}}$-winnable, so we can toggle the vertices in the $\mathbb{Z}_{2^{k-1}}$-labeling game so that we obtain the zero labeling $\pi_0$.  If we toggle the vertices identically in the $\mathbb{Z}_{2^k}$-labeling game, an easy induction using Lemma~\ref{k-1lem} implies that this results in a labeling $\lambda'$ such that $\pi_0(w) = \frac12 \lambda'(w)$ for all $w \in V(G)$.  Since $\pi_0(w)=0$ for all $w \in V(G)$, it follows that $\lambda'(w)=0$ as well, making $\lambda'$ the zero labeling.  Thus, $\lambda$ is $\mathbb{Z}_{2^k}$-winnable.  Since $\lambda$ is arbitrary, that makes $G$ $\mathbb{Z}_{2^k}$-AW, proving the theorem. 
\end{proof}

\section{Neighborhood Lights Out Game and $\mathbb{Z}_{2^k}$-Winnability} \label{Neighborhood}

Our main results for the $\mathbb{Z}_{2^k}$-labeling game will be characterizations of $\mathbb{Z}_{2^k}$-AW path graphs, cycle graphs, and complete bipartite graphs.  This requires us to prove our conditions on the graphs are both necessary and sufficient for the graphs to be $\mathbb{Z}_{2^k}$-AW.  We have already simplified our task a great deal.  Theorem~\ref{reduce2} implies that we need only find necessary and sufficient conditions for our graphs to be $\mathbb{Z}_2$-AW.  In this section, we use the neighborhood lights out game to help us find necessary conditions for winnability.

The neighborhood lights out game is advantageous to our study of the group labeling game for two reasons.  First, the $(N,2)$-lights out game plays very similarly to the $\mathbb{Z}_2$-labeling game.  In the $\mathbb{Z}_2$-labeling game, when we toggle a vertex whose label is 1, that changes its own label to 0 and the label of each adjacent vertex from 0 to 1 or from 1 to 0.  This is identical to the $(N,2)$-lights out game.  Also, if a vertex has label 0, then toggling it in the $\mathbb{Z}_2$-labeling game has no effect on the labeling of the graph.  Thus, the $\mathbb{Z}_2$-labeling game is equivalent to playing the $(N,2)$-lights out game where we only toggle vertices that have label 1.  This means that if we want even a chance to win the $\mathbb{Z}_2$-labeling game, we have to be able to win the $(N,2)$-lights out game as well.  This gives us the following.

\begin{lem} \label{winnecessary}
If a graph $G$ is $\mathbb{Z}_2$-AW, then $G$ is $(N,2)$-AW.
\end{lem}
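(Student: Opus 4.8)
The plan is to argue directly from the observation, already recorded in the discussion preceding the statement, that the $\mathbb{Z}_2$-labeling game is just a restricted form of the $(N,2)$-lights out game. Since a $\mathbb{Z}_2$-labeling and an $(N,2)$-labeling are literally the same object --- a function $\lambda \colon V(G) \to \mathbb{Z}_2$ --- it suffices to show that every $\mathbb{Z}_2$-winnable labeling is $(N,2)$-winnable, and then apply this to an arbitrary labeling of $V(G)$.

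So fix a labeling $\lambda$, and use the hypothesis that $G$ is $\mathbb{Z}_2$-AW to choose a sequence of vertices $v_1, \dots, v_t$ whose successive toggling in the $\mathbb{Z}_2$-labeling game transforms $\lambda$ into the zero labeling. Let $\mu_0 = \lambda, \mu_1, \dots, \mu_t$ be the labelings this produces, so $\mu_t$ is the zero labeling. The key dichotomy is this: if $\mu_{i-1}(v_i) = 0$, then toggling $v_i$ in the $\mathbb{Z}_2$-labeling game does nothing, so $\mu_i = \mu_{i-1}$; and if $\mu_{i-1}(v_i) = 1$, then toggling $v_i$ in the $\mathbb{Z}_2$-labeling game has exactly the same effect as toggling $v_i$ in the $(N,2)$-lights out game, namely it flips the label of $v_i$ and of each neighbor of $v_i$.

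Consequently, delete from $v_1, \dots, v_t$ every index $i$ with $\mu_{i-1}(v_i) = 0$ and let $w_1, \dots, w_s$ be the surviving subsequence. Then toggling $w_1, \dots, w_s$ in the $(N,2)$-lights out game, starting from $\lambda$, passes through precisely the non-repeated steps of $\mu_0, \dots, \mu_t$, and so it too terminates at $\mu_t$, the zero labeling. Hence $\lambda$ is $(N,2)$-winnable, and since $\lambda$ was arbitrary, $G$ is $(N,2)$-AW.

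I do not expect a genuine obstacle here; the only point requiring a little care is the bookkeeping claim that deleting the ``label-$0$'' toggles does not disturb the labelings seen by the toggles that remain, and this is immediate since such a toggle is a no-op in the $\mathbb{Z}_2$-labeling game. One could equally well phrase the argument as a short induction on $t$, but the subsequence description above seems cleanest.
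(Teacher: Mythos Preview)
Your argument is correct and is essentially the paper's own: the lemma is stated there as an immediate consequence of the preceding paragraph, which records exactly your dichotomy (toggling a label-$0$ vertex is a no-op, toggling a label-$1$ vertex coincides with an $(N,2)$-toggle). Your write-up simply makes the subsequence bookkeeping explicit, which the paper leaves implicit.
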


The second advantage of the $(N,2)$-lights out game is that winnability is generally easier to determine than in the $\mathbb{Z}_2$-labeling game.  We illustrate this on a general result for complete bipartite graphs.  We should note that $(N,m)$-AW complete bipartite graphs were characterized in \cite{paper11}.  However, here we give a much more elegant proof.

Recall that a complete bipartite graph is a graph whose vertices can be partitioned into two sets, $P_1$ and $P_2$, such that $E(G) = \{ vw : v \in P_1$ and $w \in P_2 \}$.  Before proving our main result, we must prove the following.

\begin{lem} \label{nbipartitesetup}
Let $k,n,p \ge 1$ and $m \ge 2$. 
\begin{enumerate}
\item \label{bipartitestandardform} Suppose we are playing either the $(N,m)$-lights out game or the $\mathbb{Z}_{2^k}$-labeling game on $K_{n,p}$.  If $P_1$ and $P_2$ are the parts from the partition of $V(K_{n,p})$ (with $|P_1|=n$ and $|P_2|=p$), then for any initial labeling of $V(G)$, we can toggle the vertices in such a way that all vertices in $P_1$ have label 0 and all vertices in $P_2$ have the same label.
\item \label{reducevariables} Suppose that an $(N,m)$-winnable labeling has all vertices in $P_1$ (respectively $P_2$) having the same label.  In a winning toggling, each vertex in $P_1$ (resp. $P_2$) is toggled the same number of times.
\end{enumerate}
\end{lem}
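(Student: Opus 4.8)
The plan is to handle the two parts separately, with part (\ref{bipartitestandardform}) being essentially a normalization/Gaussian-elimination argument and part (\ref{reducevariables}) being a symmetry argument.

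For part (\ref{bipartitestandardform}), the key observation is that in $K_{n,p}$, toggling a vertex $v \in P_1$ adds $1$ to $v$'s own label and to the label of every vertex in $P_2$ (it touches no other vertex of $P_1$), and symmetrically for $v \in P_2$. So first I would toggle vertices in $P_2$ to zero out all of $P_1$: for each $v \in P_1$, toggling $v$ itself adds $1$ to $\lambda(v)$ and adds $1$ uniformly to all of $P_2$; but more to the point, since each vertex of $P_2$ is adjacent to all of $P_1$, toggling a single vertex $w \in P_2$ a suitable number of times $t_w$ shifts the label of \emph{every} vertex in $P_1$ by $t_w$. Hence, to make a given vertex $u \in P_1$ have label $0$, I toggle $u$ itself the appropriate number of times — but this also perturbs $P_2$. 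The clean way: first use toggles of vertices in $P_1$ to force all labels of $P_2$ to be equal (toggling $u \in P_1$ shifts every $P_2$-label by $1$ and shifts only $\lambda(u)$ among $P_1$; iterating over $u \in P_1$ we can adjust the $P_2$-labels to agree — actually this changes all $P_2$ labels uniformly, so that alone won't equalize them). Let me restate the intended order: toggle each vertex $w \in P_2$ as needed so that all of $P_1$ becomes $0$. Toggling $w \in P_2$ changes $\lambda(w)$ and shifts every $P_1$-label by $1$; toggling a \emph{second} vertex $w' \in P_2$ shifts every $P_1$-label by another $1$. So the total shift applied to $P_1$ is $\sum_{w \in P_2} t_w$, which is a single scalar and cannot by itself make $n$ arbitrary labels zero unless they were already equal. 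Therefore the real first step must be: toggle vertices of $P_1$ to equalize $P_2$ — no; toggle vertices of $P_1$ to zero out the labels within $P_1$ up to... The correct argument is the standard one: toggle the vertices of $P_2$ some amounts $(t_w)$ and the vertices of $P_1$ some amounts $(s_u)$; the final label of $u \in P_1$ is $\lambda(u) + s_u + \sum_{w} t_w$, and the final label of $w \in P_2$ is $\lambda(w) + t_w + \sum_u s_u$. Choose each $s_u$ to make $u$'s label $0$ regardless of the (as yet undetermined) $\sum t_w$ — impossible since $s_u$ appears before $\sum t_w$ is fixed — unless we fix $\sum t_w = 0$ first. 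So: set all $t_w = 0$ initially; choose $s_u := -\lambda(u) - \sum_{u'} s_{u'}$... this is circular in $\sum s_u$. Resolve it: let $c = \sum_u s_u$; then we need $s_u = -\lambda(u) - c$ for all $u$ (to make $w$-labels equal — actually to make $P_1$ labels $0$ we need $s_u = -\lambda(u)$ and then $c = -\sum\lambda(u)$ is determined, and then each $w \in P_2$ gets label $\lambda(w) + t_w + c$). Now choose $t_w := -c$ for all $w$, making every $P_2$-label equal to $\lambda(w) - c + ( -c)$ — no, then they're still not equal. Set instead $t_w$ arbitrary: the $P_2$-labels are $\lambda(w) + t_w + c$; pick $t_w = -\lambda(w) + \lambda(w_0)$ for a fixed $w_0 \in P_2$, so all $P_2$-labels become $\lambda(w_0) + c$, a common value; and then re-examine $P_1$: its labels are now $\lambda(u) + s_u + \sum_w t_w$, so I re-choose $s_u = -\lambda(u) - \sum_w t_w$, which forces $c = -\sum_u\lambda(u) - n\sum_w t_w$, consistent since $\sum_w t_w$ is already determined by the $t_w$ choice. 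Thus all of $P_1$ is $0$ and all of $P_2$ shares the common label $\lambda(w_0) + c$. This works identically in $\mathbb{Z}_m$ and in $\mathbb{Z}_{2^k}$. \textbf{The main obstacle} is simply keeping the order of operations and the bookkeeping of the interacting sums straight; conceptually it is just solving a $2\times 2$-structured linear system over $\mathbb{Z}_m$ (or over $\mathbb{Z}_{2^k}$, interpreting labels as cosets per the discussion preceding Lemma \ref{2lem}), which always has a solution because the relevant coefficient pattern is invertible or because one variable is free.

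For part (\ref{reducevariables}), suppose the $(N,m)$-winnable labeling $\lambda$ has all vertices of $P_1$ bearing the same label (the $P_2$ case is symmetric). Let $\sigma$ be any winning toggling, recorded as a function $\tau: V(K_{n,p}) \to \mathbb{Z}_m$ giving the number of times (mod $m$) each vertex is toggled; since toggles commute and toggling a vertex $m$ times is the identity, only $\tau$ matters. The final label of a vertex $u \in P_1$ is $\lambda(u) + \tau(u) + \sum_{w \in P_2}\tau(w) = 0$. Because $\lambda$ is constant on $P_1$ and $\sum_{w \in P_2}\tau(w)$ does not depend on $u$, we conclude $\tau(u)$ is the same for every $u \in P_1$; that is exactly the claim. \textbf{The subtlety} here is justifying that a winning sequence can be encoded by the toggle-count vector $\tau$ and that the final-label formula is the displayed affine expression — both follow from the additivity and commutativity of the neighborhood toggle operation on $K_{n,p}$, which I would state explicitly. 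I expect part (\ref{reducevariables}) to be short once the encoding is set up; the genuine work, such as it is, lies in part (\ref{bipartitestandardform}).
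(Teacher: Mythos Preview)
Your argument for part~(\ref{reducevariables}) is correct and is essentially the paper's argument, just phrased algebraically via the toggle-count vector $\tau$ rather than by contradiction.

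Your argument for part~(\ref{bipartitestandardform}), however, has a genuine gap. The lemma covers \emph{both} the $(N,m)$-lights out game and the $\mathbb{Z}_{2^k}$-labeling game, and your entire analysis rests on the formula that the final label of $u \in P_1$ is $\lambda(u) + s_u + \sum_w t_w$. That formula is valid only in the $(N,m)$-game, where toggling a vertex adds $1$ to it and to each neighbor regardless of the current labeling. In the $\mathbb{Z}_{2^k}$-labeling game, toggling $v$ adds $\lambda(v)$ (the \emph{current} label of $v$) to $v$ and its neighbors; the effect depends on the labeling at the moment of toggling, so toggles neither commute nor compose additively, and a sequence of toggles cannot be summarized by a count vector $(s_u, t_w)$. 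Your closing sentence ``This works identically in $\mathbb{Z}_m$ and in $\mathbb{Z}_{2^k}$'' is therefore unjustified, and the proposal does not establish (\ref{bipartitestandardform}) for the labeling game.

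Even for the $(N,m)$-game, the paper's route is far simpler and avoids all the bookkeeping you wrestled with. The paper proceeds sequentially rather than by solving a system: first toggle each vertex $w \in P_2$ repeatedly until its own label is $0$ (possible in either game, since toggling increments by $1$ in the neighborhood game and doubles in the labeling game; and since $P_2$ is independent, these toggles do not disturb one another). Then toggle each vertex $u \in P_1$ until its own label is $0$; again $P_1$ is independent, so these toggles leave the other $P_1$-labels alone, and since every $u \in P_1$ is adjacent to every $w \in P_2$, each such toggle shifts all $P_2$-labels by the same amount, so the $P_2$-labels remain equal to one another throughout. This two-pass argument works verbatim in both games and sidesteps the circular dependence on $\sum s_u$ and $\sum t_w$ that tripped you up.
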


\begin{proof}
For (\ref{bipartitestandardform}), in both games we obtain the desired labeling by first toggling each vertex in $P_2$ until it has label 0.  This can be done in the neighborhood game because each toggle increases the label of the toggled vertex by 1 modulo $m$, so we get to label 0 within $k-1$ toggles.  This can be done in the $\mathbb{Z}_{2^k}$-labeling game since each toggle doubles the label of the toggled vertex, so we get to label 0 within $k$ toggles.  Since no two vertices in $P_2$ are adjacent, this results in all vertices in $P_2$ having label 0.  Then we toggle each vertex in $P_1$ until it has label 0.  As before, this results in each vertex in $P_1$ having label 0.  Also, since every vertex in $P_1$ is adjacent to every vertex in $P_2$, every vertex in $P_2$ will have the same label, since each toggle of a vertex in $P_1$ increases every vertex in $P_2$ by the same number.

For (\ref{reducevariables}), assume that all vertices in $P_1$ have the same label.  For contradiction, assume two vertices $v,w \in P_1$ are toggled a different number of times for a winning toggling.  Once this is done, $v$ and $w$ have different labels.  This will still hold when all vertices in $P_1$ are toggled, since none of them are adjacent to $v$ or $w$.  Each time we toggle a vertex in $P_2$, this increases the labels of $v$ and $w$ by 1 modulo $m$.  Thus, once all vertices in $P_2$ are toggled, $v$ and $w$ still have different labels.  But now all vertices have been toggled, and it is impossible for both $v$ and $w$ to have label 0.  This contradicts our assumption of having a winning toggling, and so $v$ and $w$ must be toggled the same number of times.  The proof for all vertices of $P_2$ having the same label is similar.
\end{proof}

We are now ready to characterize $(N,m)$-AW complete bipartite graphs.

\begin{thm}
\label{nbipartite}
The graph $K_{n,p}$ is $(N,m)$-AW if and only if $\gcd(m,np-1)=1$.
\end{thm}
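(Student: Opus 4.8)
The plan is to reduce the entire question to the solvability of a single linear congruence. First I would record the (standard) fact that in the $(N,m)$-lights out game every toggle is reversible — toggling a vertex $m-1$ further times undoes it — so if a labeling $\lambda'$ is obtained from a labeling $\lambda$ by a sequence of toggles, then $\lambda$ and $\lambda'$ are reachable from one another, and hence $\lambda$ is $(N,m)$-winnable if and only if $\lambda'$ is. Combined with Lemma~\ref{nbipartitesetup}(\ref{bipartitestandardform}), this shows that $K_{n,p}$ is $(N,m)$-AW if and only if, for every $c \in \mathbb{Z}_m$, the labeling $\lambda_c$ in which every vertex of $P_1$ has label $0$ and every vertex of $P_2$ has label $c$ is $(N,m)$-winnable.

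Next, fix $c$ and analyze when $\lambda_c$ is winnable. Since all vertices of $P_1$ carry the same label and all vertices of $P_2$ carry the same label, Lemma~\ref{nbipartitesetup}(\ref{reducevariables}) tells us that in any winning toggling all vertices of $P_1$ are toggled the same number of times, say $a$, and all vertices of $P_2$ the same number of times, say $b$; conversely, any such ``uniform'' toggling is determined by a pair $(a,b) \in \mathbb{Z}_m \times \mathbb{Z}_m$. I would then compute the final label of each vertex under the uniform toggling $(a,b)$: a vertex of $P_1$ is incremented once for each of the $a$ toggles of itself and once for each of the $pb$ toggles of its $p$ neighbours in $P_2$, ending at $a + pb$; similarly a vertex of $P_2$ starts at $c$ and ends at $c + b + na$. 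Hence $\lambda_c$ is winnable precisely when there exist $a,b \in \mathbb{Z}_m$ with
\[
a + pb \equiv 0 \pmod{m}, \qquad na + b \equiv -c \pmod{m}.
\]

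Finally, eliminating $a \equiv -pb$ from the first congruence collapses the system to the single congruence $(np-1)\,b \equiv c \pmod{m}$, after which $a$ is forced. This is solvable in $b$ for every $c \in \mathbb{Z}_m$ exactly when $np-1$ is a unit modulo $m$, i.e.\ when $\gcd(m, np-1) = 1$; and if $\gcd(m,np-1)>1$ then taking $c=1$ makes $\lambda_c$ unwinnable. Together with the first-paragraph reduction this proves the theorem in both directions. I do not expect a serious obstacle here: the only points needing a little care are the justification that the standard-form reduction preserves winnability in both directions and that Lemma~\ref{nbipartitesetup}(\ref{reducevariables}) legitimately lets us restrict attention to uniform togglings, while the arithmetic is immediate. (As a sanity check, the same condition also drops out of computing $\det N(K_{n,p}) = 1-np$ and requiring it to be a unit mod $m$, but the above argument is the more elegant route.)
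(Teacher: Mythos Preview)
Your proposal is correct and follows essentially the same route as the paper: reduce via Lemma~\ref{nbipartitesetup}(\ref{bipartitestandardform}) to the standard labelings $\lambda_c$, use Lemma~\ref{nbipartitesetup}(\ref{reducevariables}) to pass to a uniform toggling pair, derive the $2\times 2$ linear system over $\mathbb{Z}_m$, and read off the unit condition on $np-1$. The only notable difference is that you make explicit the reversibility argument justifying why the standard-form reduction preserves winnability in both directions, whereas the paper simply says ``we can assume'' and folds both implications into a single paragraph; your presentation is slightly cleaner but not substantively different.
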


\begin{proof}
We begin by assuming $K_{n,p}$ (with parts $P_1$ and $P_2$) is $(N,m)$-AW and proving that $\gcd(m,np-1)=1$.  Let $\lambda: V(G) \rightarrow \mathbb{Z}_m$ be any labeling.  By Lemma~\ref{nbipartitesetup}(\ref{bipartitestandardform}), we can assume that there is some $a \in \mathbb{Z}_m$ such that $\lambda(v)=0$ for all $v \in P_1$ and $\lambda(v)=a$ for all $v \in P_2$.  By Lemma~\ref{nbipartitesetup}(\ref{reducevariables}) and the fact that $K_{n,p}$ is $(N,m)$-winnable, there exist $x,y \in \mathbb{Z}$ such that toggling every vertex in $P_1$ $x$ times and toggling every vertex in $P_2$ $y$ times results in the zero labeling.  Considering each vertex individually, each vertex in $P_1$ gets its label increased by each vertex in $P_2$ by $y$.  Each such vertex is also increased by its own toggling, which increases its label by $x$  Since there are $p$ vertices in $P_2$, and since each vertex in $P_1$ begins with label 0, the terminal label for each vertex in $P_1$ is $x+py$.  Similarly, the terminal label for each vertex in $P_2$ is $nx+y+a$.  Note that the $a$ appears in this last expression because the initial label of each vertex of $P_2$ is $a$.

Putting together the expressions we get for the terminal labels of the vertices, along with the fact that each terminal label is 0, we get the equations $x+py=0$ and $nx+y+a=0$.  Since $\lambda$ is $(N,m)$-winnable, this system of equations must always have a solution.  Conversely, if a solution of this system of equations exists, then if each vertex of $P_1$ is toggled $x$ times and each vertex of $P_2$ is toggled $y$ times, this is a winning toggling for the game.  Thus, $K_{n,p}$ is $(N,m)$-AW if and only if the system $x+py=0$ and $nx+y+a=0$ has a solution.  By solving the system directly or computing the determinant of $\left[ \begin{matrix} 1 & p \\ n & 1 \end{matrix} \right]$, a solution always exists if and only if $np-1$ is a unit in $\mathbb{Z}_m$.  This occurs precisely when $\gcd(m,np-1) =1$.
\end{proof}

If we let $m=2^k$, the condition $\gcd(2^k,np-1)=1$ is equivalent to one or both of $n$ and $p$ being even.  Putting this together with Lemma~\ref{winnecessary} and Lemma~\ref{reduce2}, we get the following.

\begin{cor}
\label{bipartitenecessary}
If $K_{m,n}$ is $\mathbb{Z}_{2^k}$-AW, then one or both of $m$ and $n$ is even.
\end{cor}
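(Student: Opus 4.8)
The plan is to derive this corollary directly by chaining together the results already established in the excerpt, with essentially no new computation required. The statement claims that if $K_{m,n}$ is $\mathbb{Z}_{2^k}$-AW, then at least one of $m,n$ is even; equivalently, we must rule out the case where both $m$ and $n$ are odd.

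First I would invoke Lemma~\ref{winnecessary}, which tells us that $\mathbb{Z}_2$-AW implies $(N,2)$-AW, together with Theorem~\ref{reduce2}, which tells us that $\mathbb{Z}_{2^k}$-AW is equivalent to $\mathbb{Z}_2$-AW. Combining these, if $K_{m,n}$ is $\mathbb{Z}_{2^k}$-AW, then $K_{m,n}$ is $(N,2)$-AW. (Note that the corollary's use of the letters $m,n$ for the part sizes clashes with the modulus $m$ appearing in the general theory, but here the relevant modulus is fixed at $2$, so there is no real ambiguity.)

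Next I would apply Theorem~\ref{nbipartite} with modulus $2$: $K_{m,n}$ is $(N,2)$-AW if and only if $\gcd(2, mn-1)=1$, i.e. if and only if $mn-1$ is odd, i.e. if and only if $mn$ is even. The only remaining step is the elementary observation that $mn$ is even precisely when at least one of $m$ and $n$ is even — contrapositively, if both $m$ and $n$ are odd then $mn$ is odd, $mn-1$ is even, $\gcd(2,mn-1)=2\ne 1$, and so $K_{m,n}$ fails to be $(N,2)$-AW and hence fails to be $\mathbb{Z}_{2^k}$-AW. This completes the argument.

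There is no genuine obstacle here; the corollary is a one-line consequence of three prior results plus a parity fact about products of integers. The only thing to be careful about is bookkeeping: making sure the implication arrows point the right way (winnability for the group game forces winnability for the neighborhood game, not the reverse) and that the equivalence in Theorem~\ref{reduce2} is used to pass between $\mathbb{Z}_{2^k}$ and $\mathbb{Z}_2$ before applying Lemma~\ref{winnecessary}. I would write the proof in two or three sentences and not belabor it.
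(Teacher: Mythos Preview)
Your proposal is correct and follows essentially the same route as the paper: chain Theorem~\ref{reduce2}, Lemma~\ref{winnecessary}, and Theorem~\ref{nbipartite}, then reduce the gcd condition to the parity of the product. The only cosmetic difference is that the paper plugs $2^k$ into the gcd condition while you plug in $2$; since $\gcd(2^k,mn-1)=1$ and $\gcd(2,mn-1)=1$ are equivalent, this makes no difference.
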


In \cite{paper11}, winnability in the neighborhood lights out game was also determined for paths and cycles for all $(N,m)$-lights out games.  If we state these results in the case that $m=2$, we get that $P_n$ is $(N,2)$-AW precisely when $n \equiv 0$ or 1 (mod 3) and $C_n$ is $(N,2)$-AW precisely when $n \equiv 1$ or 2 (mod 3).  Putting these results together with Lemma~\ref{winnecessary} gives us the following.

\begin{lem}
\label{pathcyclenecessary}
Let $n,k \in \mathbb{N}$.
\begin{enumerate}
\item If $n \ge 1$ and $P_n$ is $\mathbb{Z}_{2^k}$-AW, then $n \equiv 0$ or 1 (mod 3).
\item If $n \ge 3$ and $C_n$ is $\mathbb{Z}_{2^k}$-AW, then $n \equiv 1$ or 2 (mod 3).
\end{enumerate}
\end{lem}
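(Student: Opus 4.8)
The plan is to obtain both parts as a short chain of implications built entirely from results already established, together with the known classification of $(N,2)$-Always Winnable paths and cycles. Suppose $P_n$ (resp.\ $C_n$) is $\mathbb{Z}_{2^k}$-AW. First I would apply Theorem~\ref{reduce2} to conclude that $P_n$ (resp.\ $C_n$) is $\mathbb{Z}_2$-AW. Then Lemma~\ref{winnecessary} yields that $P_n$ (resp.\ $C_n$) is $(N,2)$-AW. After these two steps the hypothesis has been reduced entirely to a statement about the neighborhood $2$-lights out game, which is exactly the form in which winnability of paths and cycles is already understood.

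The next step is to invoke the characterization of $(N,m)$-AW paths and cycles from \cite{paper11}, specialized to $m = 2$ (as recorded in the paragraph preceding the statement): $P_n$ is $(N,2)$-AW if and only if $n \equiv 0$ or $1 \pmod 3$, and $C_n$ is $(N,2)$-AW if and only if $n \equiv 1$ or $2 \pmod 3$. Combining this with the previous step, $n$ must satisfy the corresponding congruence in each case, which is precisely the necessary condition claimed in parts (1) and (2). Thus both statements follow at once.

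Since this is purely a composition of earlier results, I do not expect any genuine obstacle; the substantive work is hidden in the cited classification of $(N,2)$-AW paths and cycles and in the reduction Theorem~\ref{reduce2}. The only points requiring care are (i) correctly quoting the $m=2$ specialization of the general $(N,m)$ theorem of \cite{paper11}, and (ii) keeping track that we are only proving the ``necessary'' direction here, with sufficiency deferred to Section~\ref{PathCycleBipartite}. As a sanity check on the boundary cases, $C_3 = K_3$ is not $(N,2)$-AW (its neighborhood matrix is the all-ones matrix, which is singular over $\mathbb{Z}_2$), consistent with $3 \equiv 0 \pmod 3$; and $P_2$ is not $(N,2)$-AW, consistent with $2 \equiv 2 \pmod 3$.
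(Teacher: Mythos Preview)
Your proposal is correct and matches the paper's approach: the paper derives the lemma directly from the $(N,2)$-AW classification of paths and cycles in \cite{paper11} together with Lemma~\ref{winnecessary}, and you do the same, adding only the explicit invocation of Theorem~\ref{reduce2} to pass from $\mathbb{Z}_{2^k}$-AW to $\mathbb{Z}_2$-AW (a step the paper leaves implicit). Your sanity checks on $C_3$ and $P_2$ are also correct.
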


\section{Sufficient Conditions for $\mathbb{Z}_{2^k}$-Winnability} \label{PathCycleBipartite}

Corollary~\ref{bipartitenecessary} and Lemma~\ref{pathcyclenecessary} give necessary conditions for $P_n$, $C_n$, and $K_{n,p}$ to be $\mathbb{Z}_{2^k}$-AW.   In this section, we show that these conditions are sufficient as well.  The notation we use for $P_n$ and $C_n$ is $V(P_n) = V(C_n) = \{ v_1, v_2, \ldots , v_n \}$, $E(P_n) = \{ v_iv_{i+1} : 1 \le i \le n-1 \}$, and $E(C_n) = E(P_n) \cup \{ v_1v_n \}$.

We prove a given condition is sufficient for a graph to be $\mathbb{Z}_2$-AW by proving each possible labeling is $\mathbb{Z}_2$-winnable.  The first step in doing this is to prove we can convert each labeling into a labeling that is easy to work with, much like Lemma~\ref{nbipartitesetup}(\ref{bipartitestandardform}).  We do this in the following lemma.

\begin{lem} \label{sufficientsetup}
Let $n \in \mathbb{N}$, and suppose we are playing the $\mathbb{Z}_2$-labeling game on $P_n$ or $C_n$.
\begin{enumerate}
\item \label{pathsetup} For any initial labeling of $P_n$, we can toggle the vertices to achieve a labeling $\lambda$ with $\lambda(v_i)=0$ for all $i \ge 2$.
\item \label{cyclesetup} If $n \ge 3$, then for any initial labeling of $C_n$, we can toggle the vertices to achieve a labeling $\lambda$ with $\lambda(v_i)=0$ for all $i \ge 3$.
\end{enumerate}
\end{lem}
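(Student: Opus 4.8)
The plan is to lean on the observation made just before Lemma~\ref{winnecessary}: a toggle in the $\mathbb{Z}_2$-labeling game does nothing when the toggled vertex has label $0$, and is exactly one step of the $(N,2)$-game — flip the vertex and all of its neighbors — when it has label $1$. Thus the $\mathbb{Z}_2$-labeling game is the $(N,2)$-game subject to the single restriction that the $(N,2)$-toggle at $v$ may only be performed when $v$ is currently lit. The first thing I would prove is that this restriction can almost always be circumvented.

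\emph{Effective-toggle step.} If in the current labeling $\lambda$ either $v$ or some neighbor $u$ of $v$ has label $1$, then a sequence of at most three legal toggles has the same net effect as the $(N,2)$-toggle at $v$. When $\lambda(v)=1$ this is immediate. Otherwise $\lambda(u)=1$, and I toggle $u$, then $v$, then $u$. Each is legal in turn: after the first toggle $v$ is lit (being adjacent to $u$, it gets flipped); after the second toggle $u$ is lit again (being adjacent to $v$, it gets flipped back). Since an $(N,2)$-toggle just adds a fixed vector over $\mathbb{Z}_2$ to the labeling, these toggles commute and the two at $u$ cancel, so the net effect is precisely the $(N,2)$-toggle at $v$.

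Given this, both parts follow from one greedy sweep from the ``high'' end. For (\ref{pathsetup}): while some $v_i$ with $i\ge 2$ is lit, take $i$ maximal with that property, so $v_{i+1},\dots,v_n$ are all unlit; since the lit vertex $v_i$ is a neighbor of $v_{i-1}$, the effective-toggle step lets me perform the $(N,2)$-toggle at $v_{i-1}$. This alters labels only at $v_{i-2}$ (when $i\ge 3$), $v_{i-1}$, and $v_i$ — in particular it unlights $v_i$ and leaves $v_{i+1},\dots,v_n$ alone — so the maximal index $i\ge 2$ carrying a $1$ strictly decreases. Since it cannot decrease indefinitely, after finitely many rounds no $v_i$ with $i\ge 2$ is lit, which is the desired labeling $\lambda$. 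Part (\ref{cyclesetup}) is the same argument with ``$i\ge 2$'' replaced throughout by ``$i\ge 3$'': the $(N,2)$-toggle at $v_{i-1}$ is still legal because $v_{i-1}v_i\in E(C_n)$, and it still affects only $v_{i-2},v_{i-1},v_i$ (with $v_1$ read for $v_{i-2}$ when $i=3$), whose indices are all at most $i$, so the maximal lit index $\ge 3$ again strictly decreases.

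The substantive content is entirely in the effective-toggle step; once it is in hand the sweep is routine, and there is no risk of stalling — reaching the zero labeling would itself fulfill the conclusion. The only care needed is in the effective-toggle step, checking that each of the (up to) three toggles is legal at the moment it is applied; beyond that bookkeeping I foresee no real obstacle.
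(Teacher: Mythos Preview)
Your proof is correct and follows essentially the same strategy as the paper's: reduce the largest lit index step by step, simulating an $(N,2)$-toggle at $v_{i-1}$ via the sequence $v_i,\,v_{i-1},\,v_i$ when $v_{i-1}$ is unlit (and toggling $v_{i-1}$ directly when it is lit). Your ``effective-toggle'' abstraction packages the paper's case analysis more uniformly and lets you avoid its separate treatment of the endpoint $k=n$, but the underlying moves are the same.
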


\begin{proof}
For (\ref{pathsetup}), let $\pi$ be a labeling on $P_n$.  We prove our result by induction on the maximum number $k$ such that $\pi(v_k) = 1$ (i.e. the ``last'' vertex to have a nonzero label).  If $k=1$, then $\pi(v_i)=0$ for all $i \ge 2$, and so $\pi$ is the desired labeling.  For $k>1$, we have two cases.  If $k=n$, then we toggle $v_k=v_n$.  This causes $v_k$ to have label 0, which means the maximum $i$ with $v_i$ having a nonzero label is at most $k-1$.  By induction, we can toggle the vertices to achieve labeling $\lambda$ with $\lambda(v_i)=0$ for all $j \ge 2$.  In the case $k<n$, we consider the subcases $\pi(v_{k-1})=1$ and $\pi(v_{k-1})=0$.  If $\pi(v_{k-1})=1$, we toggle $v_{k-1}$.  This changes the labels of $v_{k-1}$ and $v_k$ to 0, which makes the maximum $i$ with $v_i$ having a nonzero label at most $k-2$.  We then apply induction as in the $k=n$ case.  In the case $\pi(v_{k-1}) =0$, we first toggle $v_k$.  This changes the labels of $v_k$ to 0 and of $v_{k-1}$ and $v_{k+1}$ to 1.  We then toggle $v_{k-1}$.  This changes the labels of $v_{k-1}$ to 0 and $v_k$ to 1.  Finally, we toggle $v_k$ again, which changes the labels of $v_k$ and $v_{k+1}$ to 0, and $v_{k-1}$ to 1.  This gives us a labeling where the maximum $i$ with $v_i$ having a nonzero label is $k-1$.  We then apply induction as in the previous cases, and the lemma is proved.  The proof for (\ref{cyclesetup}) is almost identical.  The only change is that the base cases are $k=0$ and $k=1$.
\end{proof}

Now we look to prove the conditions sufficient for the $\mathbb{Z}_2$-labeling game.  We begin with path graphs.

\begin{lem}
\label{pathcyclesufficient}
Let $n \in \mathbb{N}$.
\begin{enumerate}
\item \label{pathsufficient} If $n \ge 1$ and $n \equiv 0$ or 1 (mod 3), then $P_n$ is $\mathbb{Z}_2$-AW.
\item \label{cyclesufficient} If $n \ge 3$ and $n \equiv 1$ or 2 (mod 3), then $C_n$ is $\mathbb{Z}_2$-AW.
\end{enumerate}
\end{lem}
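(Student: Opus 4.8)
The plan is to reduce, via Lemma~\ref{sufficientsetup}, to a single ``standard'' labeling in each case, to settle the path case by an explicit light-chasing argument, and to bootstrap the cycle case from the path case. Throughout, recall that in the $\mathbb{Z}_2$-labeling game toggling a vertex labeled $0$ does nothing while toggling a vertex labeled $1$ flips it together with its neighbors; so the only toggles that have any effect are those applied to a vertex currently labeled $1$, and I must check this ``label $1$ at its turn'' condition as I go. Identifying $\mathbb{Z}_2$-labelings of a graph with vectors over $\mathbb{F}_2$, write $\mathbf{e}_w$ for the labeling that is $1$ at $w$ and $0$ elsewhere and $\mathbf 1$ for the all-$1$ labeling.

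For part (\ref{pathsufficient}), Lemma~\ref{sufficientsetup}(\ref{pathsetup}) reduces the problem to winning $\mathbf{e}_{v_1}$ on $P_n$. I would use two gadgets. A \emph{shift gadget}: from $\mathbf{e}_{v_j}$, toggle $v_j,\,v_{j+1},\,v_j,\,v_{j+2}$ in that order; each toggled vertex carries label $1$ at its turn, $v_{j-1}$ (if present) is flipped twice, and the result is $\mathbf{e}_{v_{j+3}}$ --- valid whenever $j+3\le n$. An \emph{endgame gadget}: from $\mathbf{e}_{v_{n-2}}$, toggle $v_{n-2},\,v_{n-1},\,v_{n-2},\,v_n$, which similarly is legal, flips $v_{n-3}$ an even number of times, and yields the zero labeling. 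If $n\equiv 0\pmod 3$, chain $\mathbf{e}_{v_1}\rightsquigarrow\mathbf{e}_{v_4}\rightsquigarrow\cdots\rightsquigarrow\mathbf{e}_{v_{n-2}}$ and finish with the endgame gadget. If $n\equiv 1\pmod 3$ and $n\ge 4$, first toggle $v_1$ to reach $\mathbf{e}_{v_2}$, then chain $\mathbf{e}_{v_2}\rightsquigarrow\mathbf{e}_{v_5}\rightsquigarrow\cdots\rightsquigarrow\mathbf{e}_{v_{n-2}}$ and apply the endgame gadget; the remaining case $n=1$ is a single toggle of $v_1$. Verifying the gadgets is a routine (if slightly tedious) check.

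For part (\ref{cyclesufficient}), Lemma~\ref{sufficientsetup}(\ref{cyclesetup}) lets us assume the labeling is supported on $\{v_1,v_2\}$; if it is $\mathbf{e}_{v_1}+\mathbf{e}_{v_2}$, one toggle of $v_1$ makes it a labeling supported on a single vertex, so in every nontrivial case it suffices to win a labeling $\mu$ on $C_n$ ($n\ge 4$) that has a single vertex $v_\ell$ labeled $1$. Here the shift gadget only \emph{rotates} that lone $1$ around the cycle and, since $3\nmid n$, never clears it --- this is the real obstacle, and it forces a genuinely different move, namely deleting a vertex. Pick any $v_i\ne v_\ell$ and ``cut'' $C_n$ at $v_i$: let $P$ be the path on $V(C_n)\setminus\{v_i\}$ with all edges of $C_n$ not incident to $v_i$ (so $P\cong P_{n-1}$, and its endpoints are the two $C_n$-neighbors of $v_i$), and let $\mu'$ be $\mu$ restricted to $P$. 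Since $n-1\equiv 0$ or $1\pmod 3$, part (\ref{pathsufficient}) yields a sequence winning $\mu'$ on $P$; and because $P\cong P_{n-1}$ is $(N,2)$-AW, $N(P)$ is invertible over $\mathbb{F}_2$, so the parity with which each vertex is effectively toggled in that sequence is forced to be the corresponding coordinate of $N(P)^{-1}\mu'$.

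Now run that same sequence in the $C_n$ game. As $v_i$ is never toggled, every other vertex evolves exactly as in the $P$ game, hence ends at $0$; and $v_i$ ends with label equal to the combined parity of the effective toggles at its two neighbors $a,b$ (the endpoints of $P$), i.e. $\langle\mathbf{e}_a+\mathbf{e}_b,\,N(P)^{-1}\mu'\rangle=\langle N(P)^{-1}(\mathbf{e}_a+\mathbf{e}_b),\,\mu'\rangle$, using symmetry of $N(P)$. Since $\mu'$ is the singleton at $v_\ell$, this is the $v_\ell$-coordinate of $N(P)^{-1}(\mathbf{e}_a+\mathbf{e}_b)$; identifying $P$ with $P_{n-1}$ so that $a,b$ become the first and last vertices, it is the $j$-th coordinate of the fixed vector $y:=N(P_{n-1})^{-1}(\mathbf{e}_1+\mathbf{e}_{n-1})$, where $j$ is the position of $v_\ell$ in $P$. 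As $v_i$ ranges over all admissible cut vertices, $j$ ranges over all of $1,\dots,n-1$, so it is enough to exhibit one zero coordinate of $y$ --- and $y\ne\mathbf 1$, because $N(P_{n-1})\mathbf 1$ has value $0$ at each endpoint of $P_{n-1}$ (a path endpoint has closed neighborhood of even size) while $\mathbf{e}_1+\mathbf{e}_{n-1}$ is $1$ there. Choosing such a cut makes $v_i$ end at $0$ as well, so $\mu$ is $\mathbb{Z}_2$-winnable and $C_n$ is $\mathbb{Z}_2$-AW. The crux, and the step I expect to require the most care, is this cycle argument: the lack of a free end means ordinary light-chasing does not terminate, and the fix is the cut-to-a-path reduction combined with the $\mathbb{F}_2$-linear-algebra bookkeeping that lets us arrange for the deleted vertex to clear itself.
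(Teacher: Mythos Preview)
Your argument is correct. Both gadgets check out step by step, the chaining for $n\equiv 0,1\pmod 3$ reaches the endgame position as claimed, and in the cycle case your transfer of the $P_{n-1}$ winning sequence to $C_n$ is sound: since $v_i$ is never toggled, the labels on $V(C_n)\setminus\{v_i\}$ evolve exactly as in the $P$-game, so the effective-toggle parity vector really is $N(P_{n-1})^{-1}\mu'$, and the residual label at $v_i$ is the inner product you write. Your $y\ne\mathbf 1$ observation then guarantees a good cut.

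The paper's proof is different in flavor. For $P_n$ it also starts from $\mathbf e_{v_1}$ but then light-chases $v_1,\dots,v_n$ in order to reach the ``all $1$'s except $v_n$'' state, after which toggling every third vertex clears the path; for $C_n$ it repeats essentially the same explicit chase (stopping a step or two early) and again clears with every third vertex. So the paper handles both parts by one uniform, completely explicit toggling pattern, with the cycle case done \emph{directly} rather than by reduction to the path case. Your path argument is a different but equally explicit construction; the real divergence is in the cycle case, where you trade an explicit sequence for a structural reduction (cut to $P_{n-1}$) plus an $\mathbb F_2$-linear-algebra bookkeeping step (invertibility of $N(P_{n-1})$ and the $y\ne\mathbf 1$ check). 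Your route is more conceptual and reuses part~(\ref{pathsufficient}) nicely, at the cost of importing the $(N,2)$-AW fact for $P_{n-1}$; the paper's route is more elementary and self-contained but requires tracking a longer explicit sequence.
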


\begin{proof}
For (\ref{pathsufficient}), we assume $\lambda$ is a labeling of $P_n$ and prove $\lambda$ is $\mathbb{Z}_2$-winnable.  By Lemma~\ref{sufficientsetup}(\ref{pathsetup}), we can assume $\lambda(v_i)=0$ for $i \ge 2$.  If $\lambda(v_1)=0$, we have the zero labeling, making $\lambda$ $\mathbb{Z}_2$-winnable, so we can assume $\lambda(v_1)=1$.  We begin our toggling strategy by toggling each $v_i$ in order from $v_1$ to $v_n$.  An easy induction implies that for all $1 \le i \le n-1$, when we toggle $v_i$, every $v_j$ with $j<1$ has label 1, $v_{i+1}$ has label 1, and all other vertices have label 0.  When we toggle $v_n$ all vertices have label 1, except $v_n$, which has label 0.  

Now we consider the cases $n \equiv 0$ (mod 3) and $n \equiv 1$ (mod 3) separately.  Suppose $n \equiv 0$ (mod 3).  For each $0 \le s \le r-1$, we then toggle $v_{3s+1}$.  For $s=0$, this changes the labels of $v_1$ and $v_2$ to 0.  For each $1 \le s \le r-1$, this changes the labels of $v_{3s}$, $v_{3s+1}$, and $v_{3s+2}$ to 0.  This results in the zero labeling, which makes $\lambda$ $\mathbb{Z}_2$-winnable.

If $n \equiv 1$ (mod 3), then we toggle each $v_{3s+2}$ for all $0 \le s \le r-1$.  This changes the labels of each $v_{3s+1}$, $v_{3s+2}$, and $v_{3s+3}$ to 0, which again results in the zero labeling.  Thus, $\lambda$ is $\mathbb{Z}_2$-winnable, and so $P_n$ is $\mathbb{Z}_2$-AW.

The proof for (\ref{cyclesufficient}) is similar.  We assume $\lambda$ is a labeling of $C_n$ and prove $\lambda$ is $\mathbb{Z}_2$-winnable.  By Lemma~\ref{sufficientsetup}(\ref{cyclesetup}), we can assume $\lambda(v_i)=0$ for all $i \ge 3$.  If $\lambda(v_1)=\lambda(v_2)=0$, then $\lambda$ is the zero labeling and is thus $\mathbb{Z}_2$-winnable.  If $\lambda(v_1)=\lambda(v_2)=1$, then we can toggle $v_2$ to get $\lambda(v_i)=0$ for all $i \ne 3$.  By relabeling the vertices starting at $v_3$, we can thus assume $\lambda(v_1)=1$ and $\lambda(v_i)=0$ for all $i \ne 1$.

In the case $n \equiv 1$ (mod 3), we have $n=3r+1$ for some $r \ge 1$.  Similarly as the proof for $P_n$, we toggle each $v_i$ in order from $v_1$ to $v_{n-2}=v_{3r-1}$.  As in the proof for $P_n$, for each $i<n-2$ and $i=n-1$, $v_i$ has label 1.  Since $v_n$ is adjacent to $v_1$, then $v_n$ also has label 1.  Thus, $v_i$ has label 1 for all $i \ne n-2$ and $v_{n-2}$ has label 0.  We toggle each $v_{3s}$ for $1 \le s \le r$.  Using similar reasoning as in the proof above for $P_n$, $n \equiv 1$ (mod 3), this results in the zero labeling.

In the case $n \equiv 2$ (mod 3), we have $n=3r+2$ for some $r \ge 1$.  We toggle each $v_i$ in order from $v_1$ to $v_{n-1}=v_{3r+1}$.  Using similar reasoning as before, every vertex has label 1 except $v_{n-1}$ and $v_n$, which both have label 0.  By toggling all $v_{3s+2}$ for $0 \le s \le r-1$, we get the zero labeling.  Thus, $\lambda$ is $\mathbb{Z}_2$-winnable, and so $C_n$ is $\mathbb{Z}_2$-AW.
\end{proof}

Our next result is for complete bipartite graphs.

\begin{lem}
\label{bipartitesufficient}
If $n,p \in \mathbb{N}$ and one or both of $n$ and $p$ is even, then $K_{n,p}$ is $\mathbb{Z}_2$-AW.
\end{lem}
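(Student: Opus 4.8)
The plan is to reduce an arbitrary labeling to a very simple one using Lemma~\ref{nbipartitesetup}(\ref{bipartitestandardform}), and then exhibit an explicit winning sequence of toggles. Since $K_{n,p}$ and $K_{p,n}$ are literally the same graph, I would first assume, renaming the two parts if necessary, that $p$ is even; this is legitimate because the hypothesis guarantees at least one of $n,p$ is even. Write $P_1,P_2$ for the parts of $K_{n,p}$ with $|P_1|=n$ and $|P_2|=p$.

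Given any labeling $\lambda$ of $K_{n,p}$, Lemma~\ref{nbipartitesetup}(\ref{bipartitestandardform}) lets us toggle vertices so that every vertex of $P_1$ has label $0$ and every vertex of $P_2$ has a common label $a\in\mathbb{Z}_2$. If $a=0$ we already have the zero labeling, so the only case left is $a=1$: all of $P_1$ labeled $0$ and all of $P_2$ labeled $1$. From this configuration I would toggle every vertex of $P_2$ exactly once, in any order. The point to check is that each of these toggles is ``active'' in the $\mathbb{Z}_2$-labeling game, i.e.\ the vertex being toggled currently has label $1$ (recall that in this game toggling a label-$0$ vertex does nothing). This holds because $P_2$ is an independent set: toggling a vertex $w\in P_2$ only changes the labels of $w$ and of its neighbors, all of which lie in $P_1$, so every not-yet-toggled vertex of $P_2$ keeps label $1$. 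After all $p$ toggles, each vertex of $P_2$ has had its label flipped once and is now $0$, while each vertex of $P_1$, being adjacent to all of $P_2$, has had its label flipped exactly $p$ times and is back to $0$ since $p$ is even. This is the zero labeling, so $\lambda$ is $\mathbb{Z}_2$-winnable, and since $\lambda$ was arbitrary, $K_{n,p}$ is $\mathbb{Z}_2$-AW.

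There is no real obstacle here; the only subtlety — and the reason the $\mathbb{Z}_2$-labeling game is not word-for-word the $(N,2)$-game — is that a toggle accomplishes nothing unless the toggled vertex has label $1$, so one must make sure the chosen schedule never wastes a toggle. In the argument above this is immediate from the independence of $P_2$. If one prefers to avoid the initial ``$p$ is even'' normalization and instead work in the case where $n$ is even, the same idea applies with one extra round: after toggling every vertex of $P_2$ once (leaving $P_2$ all $0$ and $P_1$ all $1$, since $p$ may now be odd), toggle every vertex of $P_1$ once; these toggles are all active because $P_1$ is independent, each vertex of $P_1$ flips to $0$, and each vertex of $P_2$ flips $n$ further times and stays at $0$ because $n$ is even, again reaching the zero labeling.
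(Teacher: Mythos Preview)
Your proof is correct and follows essentially the same route as the paper: reduce via Lemma~\ref{nbipartitesetup}(\ref{bipartitestandardform}) to the configuration where $P_1$ is all $0$ and $P_2$ is all $1$, then toggle every vertex of the even-sized part once. The only cosmetic difference is where the ``$p$ even'' normalization happens---you rename the parts at the outset, while the paper instead inserts an extra round of toggles on $P_2$ mid-proof to swap the roles when $p$ is odd---and you are a bit more explicit than the paper in verifying that each toggle in the final round is actually active (label~$1$), which is the one place the $\mathbb{Z}_2$-labeling game differs from the $(N,2)$-game.
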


\begin{proof}
Let $P_1$ and $P_2$ be the parts of $K_{n,p}$ with $|P_1|=n$ and $|P_2|=p$.  We assume $\lambda$ is a labeling of $K_{n,p}$ and prove that $\lambda$ is $\mathbb{Z}_2$-winnable.  By Lemma~\ref{nbipartitesetup}(\ref{bipartitestandardform}), we can assume that all vertices in $P_1$ have label 0 and all vertices in $P_2$ have the same label.  If the vertices of $P_2$ have label 0, that makes $\lambda$ the zero labeling, which is obviously $\mathbb{Z}_2$-winnable, so we can assume each vertex in $P_2$ has label 1.  Without loss of generality, we can assume $p$ is even (if not, we can toggle each vertex in $P_2$ to make every vertex in $P_2$ have label 0 and every vertex in $P_1$ have label 1).

We then achieve the zero labeling by toggling each vertex in $P_2$ once.  Each toggling increases the labels of each vertex in $P_1$ by 1 mod 2.  Since there is an even number of vertices in $P_2$, each vertex in $P_1$ has its label increased by 0 mod 2.  Thus, each vertex in $P_1$ ends up with label 0.  Since each vertex in $P_2$ changes its own label to 0, this results in the zero labeling.  Thus, $\lambda$ is $\mathbb{Z}_2$-winnable, which makes $K_{n,p}$ $\mathbb{Z}_2$-AW.
\end{proof}

We can now state our main result, which follows directly from Theorem~\ref{reduce2}, Lemma~\ref{pathcyclenecessary}, Lemma~\ref{pathcyclesufficient}, and Lemma~\ref{bipartitesufficient}.

\begin{thm}
\label{pathcycleequivalent}
Let $k,n,p \in \mathbb{N}$.
\begin{enumerate}
\item \label{pathequivalent} If $n \ge 1$, then $P_n$ is $\mathbb{Z}_{2^k}$-AW if and only if $n \equiv 0$ or 1 (mod 3).
\item \label{cycleequivalent} If $n \ge 3$, then $C_n$ is $\mathbb{Z}_{2^k}$-AW if and only if $n \equiv 1$ or 2 (mod 3).
\item \label{bipartiteequivalent} If $n,p \ge 1$, then $K_{n,p}$ is $\mathbb{Z}_{2^k}$-AW if and only if one or both of $n$ and $p$ is even.
\end{enumerate}
\end{thm}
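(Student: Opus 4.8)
The plan is to assemble Theorem~\ref{pathcycleequivalent} purely as a bookkeeping exercise, since all the substantive work has been done in the preceding lemmas. The unifying reduction is Theorem~\ref{reduce2}, which tells us that for every $k \ge 1$, a graph $G$ is $\mathbb{Z}_{2^k}$-AW if and only if $G$ is $\mathbb{Z}_2$-AW. Thus each of the three statements reduces to characterizing the $\mathbb{Z}_2$-AW instances of $P_n$, $C_n$, and $K_{n,p}$, and for each graph family we already have matching necessary and sufficient conditions.

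For part (\ref{pathequivalent}): by Theorem~\ref{reduce2}, $P_n$ is $\mathbb{Z}_{2^k}$-AW iff $P_n$ is $\mathbb{Z}_2$-AW. The forward direction follows from Lemma~\ref{pathcyclenecessary}(1), which gives $n \equiv 0$ or $1 \pmod 3$ as a necessary condition (it is stated for $\mathbb{Z}_{2^k}$-AW, but via Theorem~\ref{reduce2} this is the same as $\mathbb{Z}_2$-AW). The converse is exactly Lemma~\ref{pathcyclesufficient}(\ref{pathsufficient}). Part (\ref{cycleequivalent}) is identical in structure: Theorem~\ref{reduce2} reduces to the $\mathbb{Z}_2$ case, Lemma~\ref{pathcyclenecessary}(2) supplies necessity of $n \equiv 1$ or $2 \pmod 3$, and Lemma~\ref{pathcyclesufficient}(\ref{cyclesufficient}) supplies sufficiency. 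Part (\ref{bipartiteequivalent}) likewise: Theorem~\ref{reduce2} reduces to $\mathbb{Z}_2$-AW, Corollary~\ref{bipartitenecessary} gives that at least one of $n,p$ must be even, and Lemma~\ref{bipartitesufficient} gives the converse.

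Since there is genuinely no new mathematical content, the only thing to be careful about is making sure each cited lemma is applied with the right parameter conventions — for instance that the "$\mathbb{Z}_{2^k}$-AW $\Rightarrow$ condition" statements in Lemma~\ref{pathcyclenecessary} and Corollary~\ref{bipartitenecessary} are already phrased for general $k$ (so no separate invocation of Theorem~\ref{reduce2} is needed on the necessity side), while the sufficiency lemmas are phrased only for $\mathbb{Z}_2$ and must be lifted to $\mathbb{Z}_{2^k}$ via Theorem~\ref{reduce2}. There is no real obstacle here; the "hard part," such as it is, is simply organizing the four-way citation cleanly without redundancy.

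\begin{proof}
Fix $k \ge 1$. By Theorem~\ref{reduce2}, each of $P_n$, $C_n$, and $K_{n,p}$ is $\mathbb{Z}_{2^k}$-AW if and only if it is $\mathbb{Z}_2$-AW, so in each part it suffices to characterize the $\mathbb{Z}_2$-AW instances.

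For (\ref{pathequivalent}), suppose $P_n$ is $\mathbb{Z}_{2^k}$-AW. Then Lemma~\ref{pathcyclenecessary}(1) gives $n \equiv 0$ or $1 \pmod 3$. Conversely, if $n \equiv 0$ or $1 \pmod 3$, then Lemma~\ref{pathcyclesufficient}(\ref{pathsufficient}) shows $P_n$ is $\mathbb{Z}_2$-AW, and hence $\mathbb{Z}_{2^k}$-AW by Theorem~\ref{reduce2}.

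For (\ref{cycleequivalent}), suppose $C_n$ is $\mathbb{Z}_{2^k}$-AW. Then Lemma~\ref{pathcyclenecessary}(2) gives $n \equiv 1$ or $2 \pmod 3$. Conversely, if $n \equiv 1$ or $2 \pmod 3$ (with $n \ge 3$), then Lemma~\ref{pathcyclesufficient}(\ref{cyclesufficient}) shows $C_n$ is $\mathbb{Z}_2$-AW, and hence $\mathbb{Z}_{2^k}$-AW by Theorem~\ref{reduce2}.

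For (\ref{bipartiteequivalent}), suppose $K_{n,p}$ is $\mathbb{Z}_{2^k}$-AW. Then Corollary~\ref{bipartitenecessary} gives that one or both of $n$ and $p$ is even. Conversely, if one or both of $n$ and $p$ is even, then Lemma~\ref{bipartitesufficient} shows $K_{n,p}$ is $\mathbb{Z}_2$-AW, and hence $\mathbb{Z}_{2^k}$-AW by Theorem~\ref{reduce2}.
\end{proof}
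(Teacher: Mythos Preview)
Your proof is correct and follows exactly the same approach as the paper, which simply states that the theorem ``follows directly from Theorem~\ref{reduce2}, Lemma~\ref{pathcyclenecessary}, Lemma~\ref{pathcyclesufficient}, and Lemma~\ref{bipartitesufficient}.'' Your write-up is in fact slightly more careful, since you also explicitly invoke Corollary~\ref{bipartitenecessary} for the necessity direction in part~(\ref{bipartiteequivalent}), which the paper's one-line citation list omits.
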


\section{Open Problems}

We close with three possible directions for further research.
\begin{itemize}
\item When we set out to determine whether or not our graphs were $\mathbb{Z}_{2^k}$-AW, we used a two-step process.  We first looked at the $(N,2)$-lights out game to find necessary conditions for a graph to be $\mathbb{Z}_{2^k}$-AW.  Then we proved that these conditions are sufficient as well.  However, this would not work if we had a graph whose winnability in the $\mathbb{Z}_2$-labeling game was different that in the $(N,2)$-lights out game.  Are there any graphs that are $(N,2)$-AW but not $\mathbb{Z}_2$-AW?
\item If the answer to the above question is yes, that complicates our determination of necessary conditions for a graph to be $\mathbb{Z}_{2^k}$-AW.  What makes studying the neighborhood lights out game so nice is that winning or losing the game depends only on how many times we toggle each vertex, not the order in which we toggle the vertices.  Along with some other nice properties of the game, this makes it possible to determine winnability using systems of linear equations.  In the $H$-labeling lights out game, this is not the case (see \cite{paper14} for a detailed discussion).  If we cannot depend on the neighborhood lights out game to help us prove group labeling lights out theorems, what other techniques can we use?
\item While there seems to be much to learn from the group labeling games with cyclic groups, it would be nice to see how the game works with non-cyclic groups.  By the same reasoning that led us to restricting our attention to $\mathbb{Z}_{2^k}$, we can only have $H$-winnable graphs when $|H|=2^k$ for some $k \in \mathbb{N}$.  But there are no other obvious restrictions.  Non-abelian groups are especially tempting to look at.
\end{itemize}

\bibliographystyle{amsalpha}
\bibliography{lightsout}

\end{document}